\documentclass[a4paper,11pt]{amsart}
\usepackage{amssymb}
\usepackage{euscript,eufrak,verbatim}
\usepackage[dvips]{graphicx}
\usepackage{verbatim}
\usepackage[final]{epsfig}
     \setcounter{page}{1}

\usepackage{ccaption}



\newtheorem{corollary}{Corollary}

\newtheorem{lemma}{Lemma}

\theoremstyle{definition}

\DeclareMathSymbol{\varnothing}{\mathord}{AMSb}{"3F}

 \title[wandering intervals and acim]
      {Wandering intervals and absolutely continuous invariant probability measures of interval maps}

\author[ Hongfei Cui and Yiming Ding]{}\thanks{{\it Mathematical classification (2000):} 37E05, 28D05, 37D25.}
\thanks{Keywords and Phrases: wandering interval,  contracting Lorenz map, absolutely continuous invariant probability measure, bounded backward contraction property.}


\thanks{This work was partly supported from the NSFC grant 60534080.}

\begin{document}

\maketitle

\centerline{\scshape Hongfei Cui $^{a, b}$ and Yiming Ding $^{a}$}
\medskip
{\footnotesize
 \centerline{$^{a}$ Wuhan Institute of Physics and Mathematics}
  \centerline{The Chinese Academy of Sciences, Wuhan 430071, China}
} 

\medskip
\medskip
{\footnotesize \centerline{$^{b}$ Graduate School of the Chinese
Academy of Sciences} \centerline{The Chinese Academy of Sciences,
Beijing 10049, China}
\centerline{Email Address:
cuihongfei05@mails.gucas.ac.cn, ding@wipm.ac.cn}
} 

\medskip

\begin{abstract}
For piecewise $C^1$ interval maps possibly containing critical
points and discontinuities with negative Schwarzian derivative,
under two summability conditions on the growth of the derivative
and recurrence along critical orbits, we prove
\begin{enumerate}
\item the nonexistence of wandering intervals, \item the existence of
absolutely continuous invariant measures, and \item the bounded
backward contraction property.
\end{enumerate}
The proofs are based on the method of proving the existence of
absolutely continuous invariant measures of unimodal map, developed
by Nowicki and van Strien.

\end{abstract}

\section{Introduction and main results}

The concept of wandering intervals plays an important role in
studying dynamical behavior of non-uniformly hyperbolic dynamical
system. In the area of interval dynamics, most important results
are related to the absence of wandering intervals. Our main aim in
this paper is to obtain a condition on the orbits of the critical
values to ensure the absence of wandering intervals for piecewise
$C^1$ interval maps with critical points and discontinuities, and
to give a sufficient condition to the nonexistence of wandering
intervals for multimodal maps whose orders of critical points are
different to the left and to the right.

The  motivation to show nonexistence of wandering intervals are well
known. Firstly, it is relevant to the isomorphism problem of
dynamical system. In the 1880s, Poincar$\acute{e}$  proved that each
orientation preserving homeomorphism of the circle without periodic
points is semi-conjugate to an irrational rotation. Denjoy showed
that for the $C^2$ diffeomorphism of the circle without periodic
points such wandering interval cannot exist, and this semiconjugacy
is indeed a conjugacy. Analogue of Denjoy's theory also holds for a
$C^3$ unimodal map $f$ with a non-flat critical point (whose orders
are equal to the left and to the right) and negative Schwarzian
derivative, it was shown that $f$ admits no wandering intervals, and
it is conjugate to a quadratic map $f_\mu$ $(f_\mu=\mu x(1-x),
\mu\in (0,4))$ with some value of the parameter $\mu$
\cite{MR997312} if it has no periodic attractor. Secondly, the
nonexistence of wandering intervals is also relevant to the dynamics
of single dynamical system. A very remarkable result about
iterations of rational maps of the Riemann sphere proved by Sullivan
is that there are no wandering components in Fatou set, i.e., all of
the connected components of Fatou set of a rational map are
eventually periodic \cite{MR819553}. The Julia -Fatou-Sullivan
theory for the dynamics of rational map is also valid for $C^2$
multimodal maps with non-flat critical points whose orders are equal
to the left and to the right, it was shown that there are no
wandering intervals for such maps \cite{MR1161268}.

Besides the above results, the nonexistence of wandering intervals
was proved by a series remarkable works: by Schwartz for continuous
piecewise $C^1$ interval maps $f$ with $\log|Df|$ satisfying
Lipschitz condition in \cite{MR0155061}, by Guckenheimer for maps
which everywhere have negative Schwarzian derivative in
\cite{MR553966}, by Yoccoz for $C^\infty$ homeomorphisms of the
circle with non-flat critical points in \cite{MR741080}, by Blokh
and Lyubich for smooth interval maps with all critical points are
turning points in \cite{MR1036906} and \cite{MR1036905}, and by van
Strien and Vargas for general multimodal maps whose orders of each
critical point are equal from both sides  in \cite{MR2083467}.

However, the wandering intervals may exist for some maps. For the
continuous case, Denjoy's counterexample tell us that a $C^1$
differeomorphism on the circle may have a wandering interval
\cite{MR997312}, and there are $C^\infty$ maps on compact interval
with flat critical points which have wandering intervals
\cite{MR662469}. For the discontinuous case, there are wandering
intervals for a Lorenz map if it can be renormalized to be a gap map
with irrational rotation number, Berry and Mestel showed that
wandering intervals exist only in this case for the Lorenz map $f$
with $\log|Df|$ satisfying the Lipschitz condition in
\cite{MR1122907}. There exists affine interval exchange
transformations which have wandering intervals \cite{MR1488320}
\cite{MR2465670}.

From the above examples, we cannot expect that there are no
wandering intervals for discontinuous interval maps with critical
points like continuous case under some general conditions. On the
other hand, the known results of nonexistence of wandering
intervals for continuous multimodal maps always require the orders
of each critical point are the same to the left and to the right.
Blokh had asked whether wandering intervals can exist for unimodal
maps which are smooth except at their critical point and the
critical order is different to the left and to the right (this
question is quoted in \cite {MR1239171}, or \cite{MR1603750} for a
general case). In this paper, we consider the nonexistence of
wandering intervals of interval maps with some additional
conditions, and give a sufficient condition to answer Blokh's
question for general maps in some sense.

We now give the precise statement of our main result. An interval
$J$ is a {\it wandering interval} for a map $f:M \rightarrow M$ if
it satisfies:

(a) its forward iterates $J,f(J),...,f^n(J)$ are all disjoint for
all $n> 0$;

(b) $J$ is not contained in the basin of attraction of an
attracting periodic orbit;

(c) $f^n|_J$ is a homeomorphism for all $n> 0$.

Let $ \mathcal {A}$ denote the class of interval maps satisfying
conditions 1 and 2 listed below. Then we have the following

\vspace{0.3cm}

{\bf Theorem A. } {\it Wandering intervals can not exist for each
map in $\mathcal{A}$.} \vspace{0.3cm}

 1. Let
$M$ be a compact interval $[0, 1]$, $f: M \longrightarrow M$ be a
piecewise $C^1$ interval map with negative Schwarzian derivative.
This means that there exists a finite set $C$ such that $f$ is a
diffeomorphism on each component of $M \setminus C$, and admits a
continuous extension to the boundary so that the left and the right
limits $f(c{\pm})= \lim_{x \rightarrow c\pm} f(x)$ exist. We always
regard each $c \in C$ as two points: $c+$ and $c-$, the concrete
values depend on the corresponding one-side neighborhoods we
considered. We assume that each $c\in C$ has two one-side critical
orders $l(c\pm)\in [1, \infty)$, this means that
$$
 |Df(x)|\approx
{|x-c|}^{l( c\pm)-1}, \ \ |f(x)-f(c\pm )|\approx{|x-c|}^{l(
c\pm)},
$$ for $x$ in the corresponding one-side neighborhood of c, where we say $f\approx g$ if the ratio $f/g$ is bounded
above and below uniformly in its domain. When we use $l(c)$, it
may be either $l(c+)$ or $l(c-)$, and the concrete value can be
easily understood from the context. If $l(c) > 1$, we say that $c$
is a critical point, if $l(c)=1$, we say that $c$ is a bounded
derivative point. Note that $c$ may be a critical point on one
side and is a bounded derivative point. When there is no
possibility of confusion, each point $c \in C$ will be called a
critical point without distinguishing whether $c$ is really a
critical point with $l(c)>1$, or $c$ is a bounded derivative point
with $l(c)=1$.

We also assume that $f$ is with negative Schwarzian derivative
outside of $C$, i.e., $|f'|^{-\frac{1}{2}}$ is a convex function on
each  component of $M\setminus C$.

2. We suppose that $f$ satisfies the following summability
conditions along the critical orbits. The first summability
condition is
\begin{equation}\label{the first summability}
\sum_{n=1}^{\infty} { \Big{(} \frac {
{|f^n(c)-\tilde{c}|}^{l(\tilde{c})} }
{{|f^n(c)-\tilde{c}|}^{l(c)}|Df^n(f(c))|} \Big{)} }^{1/l(c)} <
\infty , \ \ \forall c \in C,
\end{equation}
 where $\tilde{c}$ is the critical point closest to $f^n(c)$, and
 $l(c)$ and $l(\tilde{c})$ depend on the corresponding one-side neighborhoods.
 The second summability condition is
 \begin{equation}\label{the second summability}
\sum_{n=1}^{\infty} \frac{1}{|Df^n(f(c))|^{1/l(c)}} < \infty, \ \ \forall c \in C.
\end{equation}

\remark According to the first summability condition, a map  $f \in
\mathcal {A}$ can not map its critical point to another critical
point, i.e., $C\cap \cup_{n\ge 1}f^n(C)=\emptyset$, and it is easy
to see that if all of the critical orders are equal, the first
summability condition is equivalent to the second summability
condition and $C\cap \cup_{n\ge 1}f^n(C)=\emptyset$. The above
similar summability conditions along the critical orbits have been
applied to show the existence of absolutely continuous invariant
measure for unimodal maps in \cite{MR1109621}, for multimodal maps
in \cite{MR1858488} and for interval maps possible with critical
points and singularities in \cite{abv} recently. The second
summability condition is similar to the Nowicki-van Strien condition
in \cite{MR1109621}.

\remark Simple examples satisfying the conditions 1 and 2 listed
above are the contracting Lorenz maps considered in \cite{MR1753089}
and \cite{MR1254985}, which were motivated by the study of the
return map of the Lorenz equations near classical parameter values,
see Figure 1. For Lorenz map $f$ with $\log|Df|$ satisfying the
Lipschitz condition, Berry and Mestel showed in \cite{MR1122907}
that wandering intervals exist if and only if it can be renormalized
to be a gap map with irrational rotation number. This, together with
Theorem A, implies that if a $C^1$ Lorenz map $f$ with $l(c)=1$
satisfying the summability condition
$\sum_{n=1}^{\infty}\frac{1}{|Df^n(f(c))|} < \infty$ and the
negative Schwarzian derivative condition, then it can not be
renormalized to be a gap map with irrational rotation number. On the
other hand, for $C^2$ Lorenz map $f$ whose critical orders are
greater than $1$, it is conjectured in an old version of
\cite{MR1836435} (arxiv: math/9610222v1) and in \cite{MR1715194}
that $f$ admits  wandering intervals if and only if it can be
renormalized to be a gap map with irrational rotation number, while
Theorem A indicates that wandering intervals can not exist for
Lorenz map $f$ satisfying $\sum_{n=1}^{\infty}
\frac{1}{|Df^n(f(c))|^{1/l(c)}} < \infty$.

\begin{figure}[hb]
 \centering
 \includegraphics[height=4cm]{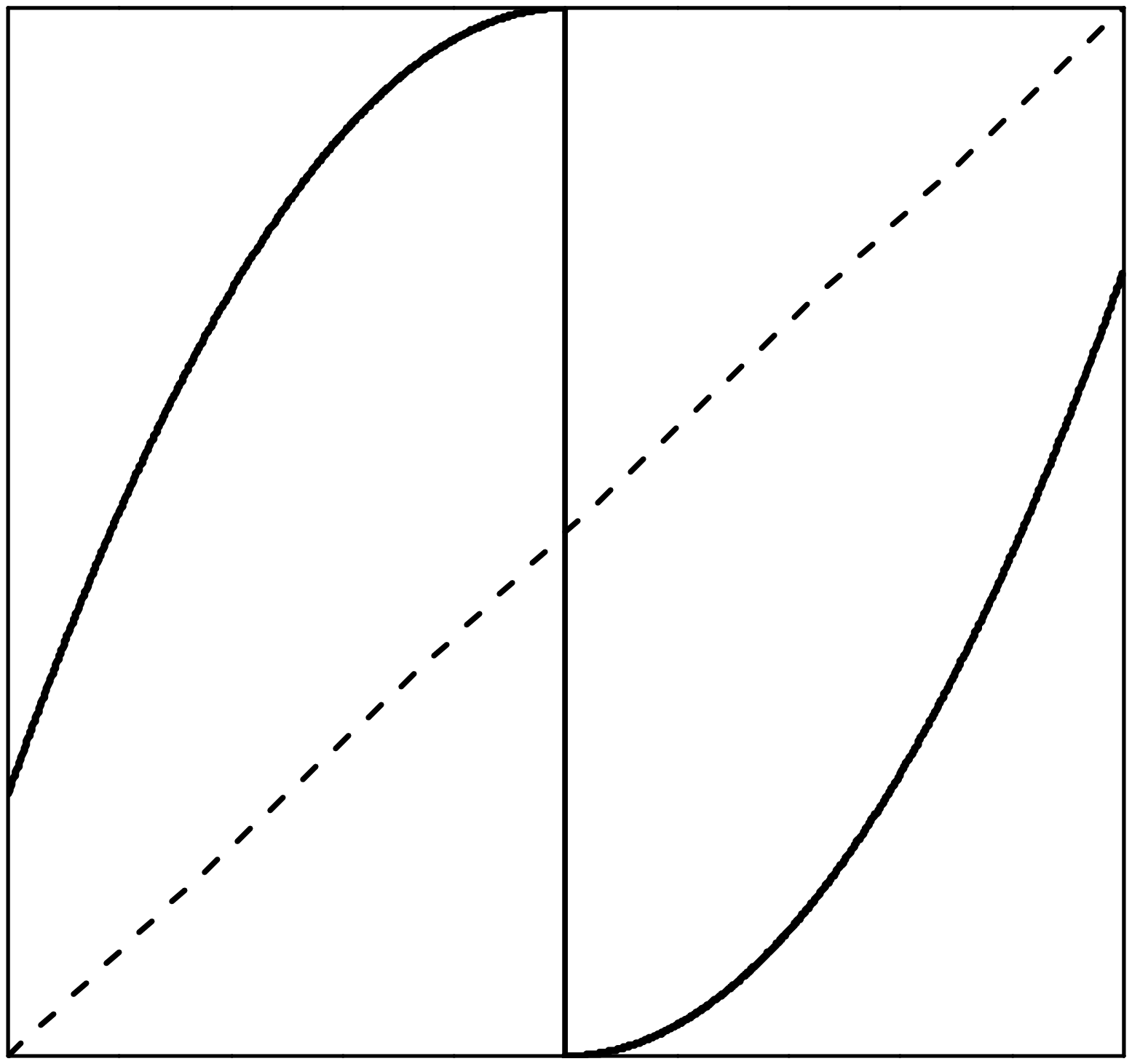}
   \includegraphics[height=4.3cm]{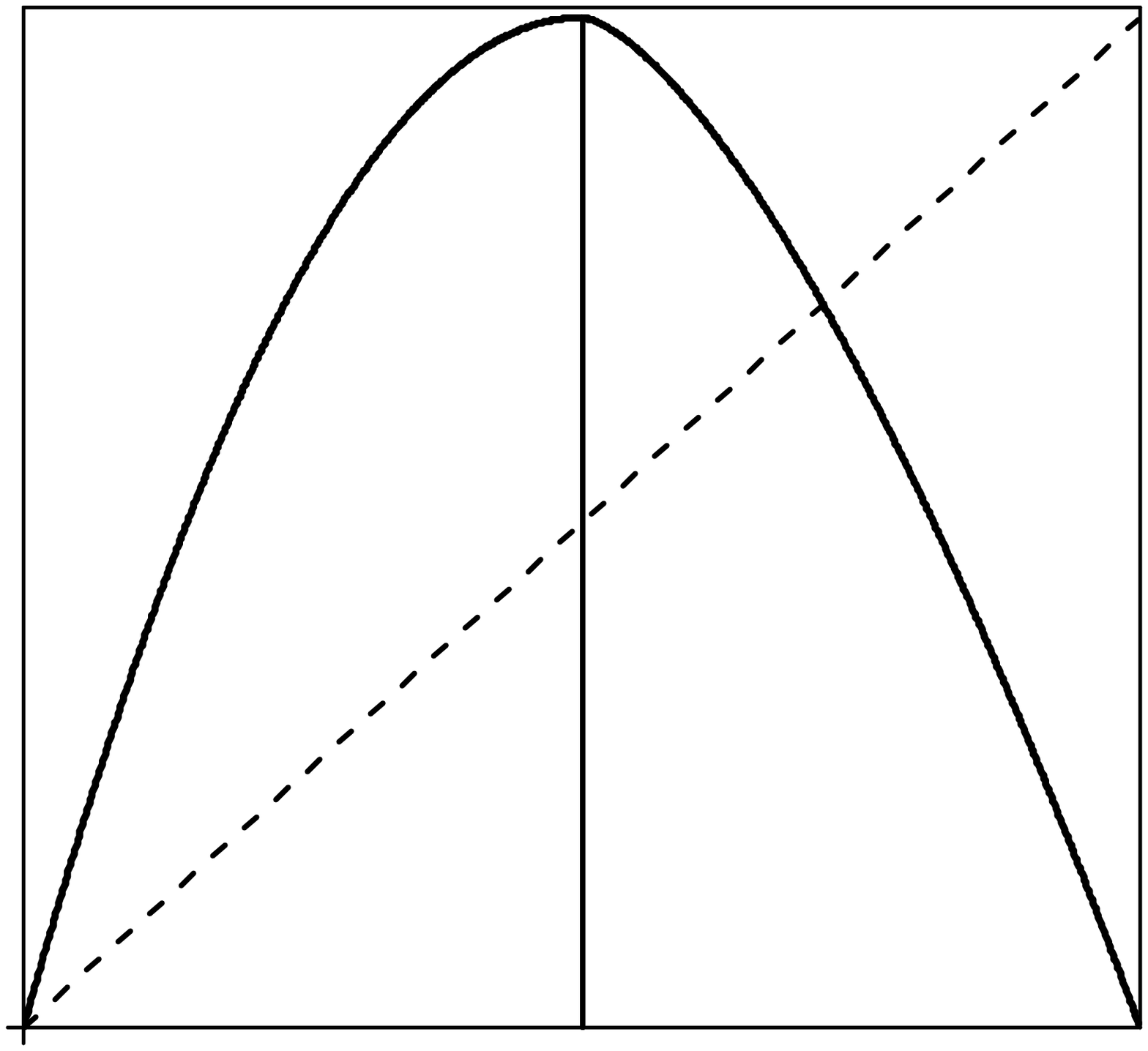}

 \caption{The contracting Lorenz map, and unimodal map with different critical orders.}
\end{figure}

In particular, for continuous maps in $\mathcal{A}$, we answer the
Blokh's question in some sense partially.

\begin{corollary}
Let $f$ be a $C^1$ multimodal map with finite critical points whose
orders may not equal to the left and to the right. If $f$ satisfies
the negative Schwarzian derivative condition and our summability
conditions, then $f$ admits no wandering intervals.
\end{corollary}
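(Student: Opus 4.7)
The plan is to deduce the corollary directly from Theorem A by verifying that a $C^1$ multimodal map with the stated hypotheses belongs to the class $\mathcal{A}$. First I would observe that a $C^1$ multimodal map is in particular a continuous, piecewise $C^1$ interval map on $M=[0,1]$: taking $C$ to be the (finite) set of critical points, $f$ is a diffeomorphism on each component of $M\setminus C$, and because $f$ is continuous the one-sided limits $f(c\pm)$ simply agree with $f(c)$. Thus the first part of condition~1 in the definition of $\mathcal{A}$ is automatic, with no genuine discontinuities present.

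Next I would check that the one-sided critical orders make sense under the corollary's hypothesis. The assumption is precisely that each $c\in C$ has well-defined orders $l(c+),l(c-)\in[1,\infty)$, possibly unequal, such that $|Df(x)|\approx|x-c|^{l(c\pm)-1}$ and $|f(x)-f(c)|\approx|x-c|^{l(c\pm)}$ on the two one-sided neighbourhoods of $c$. This matches verbatim the setup in condition~1 of $\mathcal{A}$, where each $c$ is viewed as two points $c+$ and $c-$. The negative Schwarzian derivative hypothesis is assumed in the corollary, so the remaining part of condition~1 is also met.

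Finally, condition~2 of $\mathcal{A}$, namely the two summability inequalities \eqref{the first summability} and \eqref{the second summability} along the critical orbits, is assumed outright in the corollary. Consequently $f\in\mathcal{A}$, and Theorem~A applies verbatim to give the nonexistence of wandering intervals. The only tiny point to mention in the write-up is that the first summability condition already forces $C\cap\bigcup_{n\ge1}f^n(C)=\emptyset$ (as noted in the remark after Theorem~A), so the interpretation of $\tilde c$ as \emph{the} critical point closest to $f^n(c)$ in \eqref{the first summability} is unambiguous.

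There is essentially no obstacle in this argument; the work is entirely in Theorem~A itself. The corollary is intended to highlight that the same framework answers Blokh's question for continuous multimodal maps with asymmetric critical orders, a case not covered by the previous results of van Strien--Vargas and others, which all required $l(c+)=l(c-)$.
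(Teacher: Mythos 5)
Your proposal is correct and matches the paper's (implicit) argument exactly: the corollary is simply the special case of Theorem~A in which $f$ is continuous, so one verifies membership in $\mathcal{A}$ and cites Theorem~A. Nothing more is needed.
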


Furthermore, we can prove the following Theorem B.

\vspace{0.3cm}

 {\bf Theorem B. } {\it Let $f$ be a map in
 $\mathcal{A}$, then $f$ admits an absolutely continuous invariant probability measure
 (acip). Moreover, if $l_{\max}>1$,
  the density is in $L^p$ for all $1\leq p
  <l_{\max}/(l_{\max}-1)$, where $l_{\max}$ is the
  maximum of the orders of the critical points.}
\vspace{0.3cm}

We present some comments on Theorem B. At first, Theorem B improves
a result in \cite{MR1753089}.
 It was shown in \cite{MR1753089} that for
  contracting Lorenz maps $f$ satisfying the following conditions

  (1) Outside of the unique critical point $c$ ($l(c)>1$), $f$ is of $C^3$ classes and
  with negative Schwarzian derivative, and with equal critical
  orders from both sides;

  (2) (Increasing condition) $|Df^n(f(c\pm))|>{\lambda}^n$, for each $n\geq 1$ and some
  $\lambda>1$;

  (3) (Recurrence condition) $|f^{n-1}(f(c\pm))-c|>{\exp}^{-\alpha
  n}$ for some $\alpha$ small enough, and for all $n\geq 1$, \\
then $f$ admits an absolutely continuous invariant probability.
While Theorem B tell
  us that it suffices to assume the summability condition
  $$\sum_{n=1}^{\infty} \frac{1}{|Df^n(f(c))|^{1/l(c)}} < \infty
$$
and the above condition $(1)$  for the contracting Lorenz map.
Moreover, Theorem B provides the regularity of the density of the
acip.

Secondly, Theorem B is similar to a result in \cite{abv}. It was
proved in \cite{abv} that for any piecewise $C^2$ interval map $f$
possibly containing discontinuities and singularities ($0<l(c)<1$)
and satisfying an analytical condition (uniformly expanding away
from the critical points) and

(1) {\it bounded backward contraction (BBC) property}, there is a
constant $K>0$ such that for all $\delta_0>0$, one can find
$\delta\in (0, \delta_0)$ such that for a neighborhood $N_\delta$ (a
concrete definition of $N_\delta$ is stated in section 4) of the
critical points and each $x$,
$$|D^nf(x)|\geq K, \ \ \ for\ \ n=\min\{i\geq 0; f^i(x)\in N_\delta \},$$

(2) {\it summability condition along the critical orbit}
\begin{equation}\label{summability condition}
\sum_{n=1}^{\infty}\frac{-n\log|f^n(c)-C|}{|f^n(c)-C|{|Df^{n-1}(f(c))|}^{1/(2l(c)-1)}}
< \infty,\  \forall \ c \in C \ with \ l(c)>1,
\end{equation}
then $f$ admits only finite number of absolutely continuous
invariant (physical) probability measures. Note that the summability
condition (\ref{summability condition}) implies our first
summability condition (\ref{the first summability}) and the second
summability condition (\ref{the second summability}) in our argument
for $l(c)>1$, this is because for $n$ large enough,
\begin{equation*}
\begin{split}
&{\Big(\frac { {|f^n(c)-\tilde{c}|}^{l(\tilde{c})} }
{{|f^n(c)-\tilde{c}|}^{l(c)}|Df^n(f(c))|}\Big)}^{1/l(c)}
=\frac{{|f^n(c)-\tilde{c}|}^{l(\tilde{c})/l(c)-1}}{\big(|Df(f^n(c))|
\ |Df^{n-1}(f(c))|\big)^{1/l(c)}}
\\&\leq
\frac{{|f^n(c)-\tilde{c}|}^{1/l(c)-1}}{{|Df^{n-1}(f(c))|}^{1/l(c)}}\leq
\frac{-n\log|f^n(c)-C|}{|f^n(c)-C|{|Df^{n-1}(f(c))|}^{1/(2l(c)-1)}},
\end{split}
\end{equation*}
by the definition of orders of critical points and $l(c)>1$. As
was stated in \cite{abv}, in the special cases in which there are
no singularities ($0<l(c)<1$), the summability condition
(\ref{summability condition}) can not reduce to the summability
condition assumed in \cite{MR2013929}, while our conditions can.


Moreover, we can show BBC property under our conditions.

\vspace{0.3cm}
 {\bf Theorem C.} {\it Let $f \in \mathcal{A}$, then
$f$ admits BBC property.} \vspace{0.3cm}

As stated above, the BBC property is an assumption in \cite{abv}, is
also an assumption for the studying of the statistical properties
such as decay of correlations and the Central Limit Theorem of
interval maps in \cite{MR2285510}, and an important Lemma in
\cite{MR2013929}. BBC property is true  \cite{MR623533} for
symmetric unimodal maps with negative Schwarzian derivative.  It is
also proved in \cite{MR2013358} for  multimodal case with the same
critical orders of all critical points and
$$\lim_{n \to \infty} |Df^n(f(c))|=\infty, \ \ \forall c\in C.$$
It is interesting that the motivation of the proof the BBC
property in \cite{MR2013358} is to show the existence of the acip,
but in our argument we prove the BBC using the methods of the
proof of the existence of the acip. We emphasize that the proof of
the BBC property of mutlimodal case have to use the nonexistence
of wandering intervals for multimodal map [Theorem 1.2 in
\cite{MR2013358}] and the same critical order of all of critical
points. Using Theorem C, we can consider the decay of correlations
and CLT for interval maps possibly containing critical points and
discontinuities with negative Schwarzian derivative, under some
summability conditions on the growth of the derivative and
recurrence along critical orbits in our future work.

\section {Ideas and organization of the proof}
The proof of the nonexistence of wandering intervals of interval
maps will be achieved by contradiction, so we suppose that there is
a wandering interval $J$. According to Denjoy's original ideas, most
of the proofs (see, for example \cite{MR997312}, \cite{MR1161268})
of the nonexistence of wandering intervals contain two ingredients:
a topological one and an analytical one. The topological part is to
the understanding of the detailed dynamics of interval map, while
the analytical part is to estimate the distortion using the
topological results. Then, either $J$ would be attracted by a
periodic orbit or one can get a conclusion that contradicts to the
estimation of analytic aspect. Note that the condition that the
orders of each critical point from both sides are equal is necessary
in both ingredients of previous proofs.

Our proof is completely analytical, and has links to the method of
the proof of the existence of the acip. The method, as in
\cite{MR1109621}, is to  estimate of the Lebesgue measure of
$f^{-n}(A)$,  where $A$ is a small neighborhood of the critical
points. The proof of Theorems can be divided into three steps:

Step 1. Show that if $f\in \mathcal{A}$ then there exists a constant
$K_1>0$ so that  $|f^{-n}(B(c, \epsilon))| \leq K_1 \epsilon$, for
any $c\in C$ , $n \geq 0$ and small $\epsilon \geq 0$, where $B(c,
\epsilon)=(c-\epsilon, c+\epsilon)$. This property relates the
measure of preimages of a small neighborhood of the critical points
to the measure of neighborhood of the critical points.

Step 2. Show that if $f\in \mathcal{A}$ and satisfies the above
property in step 1, then there exists a constant $K_1>0$ so that
$|f^{-n}(A)| \leq K_1 {|A|}^{1/l_{\max}}$, where $A$ is any Borel
set. A more precise version of the above two steps is stated in
Section 3. The proof seems complicated, but it is almost the same
as the proof of the existence of the acip of unimodal map in
\cite{MR1109621}, multimodal map in \cite{MR1858488}.


Step 3. In Section 4, we present the proofs of Theorem A, Theorem B,
and Theorem C. Theorem A is a direct consequence of the property in
step 2. The proof of Theorem B is classical when $f$ is continuous.
In fact, if $f$ is continuous, the existence of acip follows from
Proposition 2 and the compactness of the space of probability
measure on $M$ under the weak star topology. This kind of argument
can not be applied to our case directly because $f$ may have
discontinuities. We obtain the existence of acip by checking the
uniformly integrability of the action of Frobenius-Perron operator
on the constant function ${\bf 1}$. To show Theorem C, the result in
step 1 indicates that if $f\in \mathcal{A}$ then there exists a
constant $K>0$ such that $|Df^n(x)|>K$ for $f^n(x)\in C$, because
the critical point will not be mapped into another critical point.
This property implies that the derivatives of the preimages of the
critical points are bounded away zero. Next we can find enough Koebe
spaces so that we can relate the derivatives of the preimages of the
critical point to the derivatives of the preimages of any point in
the neighborhood of the critical point by the one-side Koebe
principle. We refer \cite{MR1239171} for the proof of the Koebe
principle.

It is possible to improve these results. The negative Schwarzian
derivative condition may be omitted, however the strategy in
\cite{MR1815700} and \cite{MR2083467} made use of the nonexistence
wandering intervals, and the method in \cite{MR1161268} involves
detailed analysis of topological dynamics of intervals maps. Note
that the negative Schwarzian derivative condition rule out the
existence of the singularities ($0<l(c)<1$), once one can get rid
of this negative Schwarzian derivative condition, then the results
in this paper can generalized to the interval maps with
singularities and critical points. Secondly, general methods of
proving the existence of the acip of smooth maps may work
efficiently without using the result of the nonexistence of
wandering intervals, the quite general conditions are known to
guarantee the existence of the acip for smooth maps with a finite
number of critical points recently in \cite{MR2393079}, but they
used the results of the nonexistence of wandering intervals, too.
However, it was
 conjectured by Ara$\acute{u}$jo et al. in \cite{abv}  that it is not possible to obtain a general result about
the existence of acip in the presence of both critical points and
singularities by assuming conditions on the derivatives growth of
the critical points.

We denote by $K_l$ the constant from the orders of the critical
points, by $K_o$ from the Koebe principle, and denote $|J|$ as the
Lebesgue measure of $J$.

\section {backward contraction}

In this section, we will use the proof of the existence of acip of
unimodal maps and multimodal maps, see \cite{MR1109621} and
\cite{MR1858488}, and we refer the chapter 5 of  \cite{MR1239171}
for more details. We only give the main arguments, and do some
modifications with the proof of  multimodal maps in
\cite{MR1858488}.

 {\bf
Proposition 1.} {\it If $f \in \mathcal {A} $, then there exists a
constant $K_1
>0$ such that
$$
|f^{-n}(B(c, \epsilon))| \leq K_1 \epsilon
$$
for any $c\in C$ , $n \geq 0$ and small $\epsilon \geq 0$, where
$B(c, \epsilon)=(c-\epsilon, c+\epsilon)$. }

\begin{proof}
Denote $E_n(c, \epsilon) :=f^{-n}(B(c, \epsilon))$, we will divide
the components $I$ of $E_n(c, \epsilon)$ into three cases. Given
$\sigma >3\epsilon$, $\sigma$ is a constant to be fixed by the
summability conditions. Let $I \subset I' \subset I''$ be the
components of $E_n(c, \epsilon), E_n(c, 3\epsilon)$, and $E_n(c,
\sigma)$ respectively. We distinguish three cases:

\begin{enumerate}

\item  $I \in \mathcal{R}_n$--the regular case, if $f^n$ has no
critical point in $I''$.

\item $I \in \mathcal{S}_n$--the sliding case, if $f^n$ has a
critical point in $I''$ but not in $I'$.

\item $I \in \mathcal{T}_n$--the transport case, if $f^n$
contains a critical point in $I'$.
\end{enumerate}

{\bf The regular case:}  Suppose $I \in \mathcal{R}_n(c)$, we know
that $f^n(I'')$ contains a $1$-scaled neighborhood of $f^n(I)$, by
the Koebe principle there exists a constant $K_\mathcal{R}>0$ such
that
$$
\frac{2\sigma}{|I''|} =\frac{|f^n(I'')|}{|I''|} \leq K_\mathcal{R}
\frac{|f^n(I)|}{|I|} \leq K_\mathcal{R} \frac{2\epsilon}{|I|}.
$$

We choose $K_\mathcal{R}$ big enough so that this holds for all
critical point $c\in C$. Then we have
\begin{equation}\label{regular}
\sum_{I\in \mathcal{R}_n(c)} |I| \leq K_\mathcal{R}
\frac{\epsilon}{\sigma} \sum_{I \in \mathcal{R}_n(c)} |I''| \leq
K_\mathcal{R} \frac{\epsilon}{\sigma}.
\end{equation}

We shall show Proposition 1 by induction. The induction
hypothesis is
\begin{equation}\label{induction}
|E_k(c, \epsilon)| \leq  \frac{ 3K_\mathcal{R}}{\sigma} \epsilon
\end{equation}
for all $0<\epsilon \leq \frac{\sigma}{3}$, $c\in C$, and $k < n$.

The above inequality (\ref{induction}) is true for $n=2$, because for $\sigma$ sufficiently small, there are
only regular cases.

In what follows we shall prove the inequality (\ref{induction}) holds for  $k=n$ for the
sliding case and for the transport case.

Let $V(\sigma) =\min \{k \geq 1, |f^k(\tilde{c})-C|< \sigma \ \
for \ \  some \ \tilde{c}\in C\}$, observe that $V(\sigma)
\rightarrow \infty$ as $\sigma \rightarrow 0$ because $f$ can not map a critical point to another critical point by the summability  conditions.

{\bf The sliding case:} Let $I \in \mathcal{S}_n(c)$, $T^0 \supset
I$ be the maximal interval on which $f^n$ is a diffeomorphism.
Denote $T_0 =f^n(T^0)\supset f^n(I)=I_0 =B(c, \epsilon)$, $R_0$
and $A_0$ be the components of $T_0\setminus A_0$ and $|R_0| \geq
|A_0|$, denote $T^0:=[\alpha_0, \alpha_{-1}]$ with that
$f^n(\alpha_0) \in
\partial A_0$. Since $I \in \mathcal{S}_n(c) $, we have $|R_0|
\geq |A_0| \geq |I_0|$. We shall construct a sequence $n=n_0>
n_1> ...\geq 0$ and a nested sequence of intervals
\begin{equation}\label{nested}
T^s \supset ...\supset T^1
\supset T^0=T \end{equation}
as following:

\begin{enumerate}

\item Choose $0<n_1<n$ such that $f^{n_1}(\alpha_0)\in C$, let
$T^1:=[\alpha_1, \alpha_0]$ be the maximal interval containing
$T^0$ on which $f^{n_1}$ is a diffeomorphism. Assume that
$n_{i-1}$ and $T^{i-1}=[\alpha_{i-2}, \alpha_{i-1}]$ are defined,
define $n_i< n_{i-1}$ such that $f^{n_i}(\alpha_{i-1})\in C$ and
let $T^i:=[\alpha_i, \alpha_{i-1}]$ be the maximal interval which
contains $T^{i-1}$ and on which $f^{n_i}$ is a diffeomorphism.
Note that for $i \geq 1$, $T^i$ and $T^{i-1}$ have a precise
common boundary $\alpha_i$ and that $I \subset T^0 \subset
...\subset T^i$.

\item Let $k_i=n_i-n_{i+1}$, $I_{i+1}=f^{n_{i+1}}(I)$,
$T_{i+1}=f^{n_{i+1}}(T^{i+1})$, $R_{i+1}$ be the component of
$T_{i+1}\setminus I_{i+1}$ and contains a critical point in its
closure, and $A_i$ be another component. Let $L_{i+1} \subset
A_{i+1}$ be the interval adjacent to $I_{i+1}$ and satisfying
$f^{k_i}(L_{i+1})=R_i$, we conclude the following relationships
between the above intervals:
$$f^{k_i}(I_{i+1})= I_i, f^{k_i}(R_{i+1})=A_i, f^{k_i}(L_{i+1})=R_i.
$$

\item The above construction stops at $n_s=0$, when $|I_s|>|R_s|$
or when $|I_s| \leq |R_s| \leq |A_s|$.
\end{enumerate}
Note that $|I_i|\leq |R_i|$ and $|A_i|<|R_i|$ for $0 \leq i \leq
s-1$.

The following Lemma is to control  $|I_s|$.

\begin{lemma}(Proposition 1.3.1 in
\cite{MR1858488}) There exists $K_2>1$ such that
\begin{equation}
|I_s|\leq |f^n(I)|\prod_{i=0}^{s-1}K_2   {\Big{(} \frac{
{|f^{k_i}(c_{i+1})-\tilde{c}|}^{l(\tilde{c})- l(c_{i+1})}}
{|Df^{k_i}(f(c_{i+1}))|} \Big{)} }^{1/l(c_{i+1})},
\end{equation}

where $\tilde{c}$ is the critical point closest to $f^{k_i}(c_i)$,
$c_{i+1}$ is the critical point in $\partial R_{i+1}$.

\end{lemma}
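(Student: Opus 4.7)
The plan is to establish a per-step estimate
\[
|I_{i+1}|\;\le\;K_2\,|I_i|\,\bigg(\frac{|f^{k_i}(c_{i+1})-\tilde{c}|^{l(\tilde{c})-l(c_{i+1})}}{|Df^{k_i}(f(c_{i+1}))|}\bigg)^{1/l(c_{i+1})}
\]
for $0\le i\le s-1$, and then telescope, using $I_0=f^n(I)$. The geometric picture is that on $T_{i+1}$ the iterate $f^{k_i}$ is a one-sided branch with a critical point exactly at $c_{i+1}\in\partial T_{i+1}$: this critical point appears because $T^{i+1}\supsetneq T^i$ is, by construction, the maximal extension of the diffeomorphism domain of $f^{n_{i+1}}$ just past the point $\alpha_i$ where the higher iterate $f^{n_i}$ ceases to be a diffeomorphism, and $f^{n_{i+1}}(\alpha_i)=c_{i+1}\in C$. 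Inside $T_{i+1}$ the three intervals $R_{i+1},I_{i+1},L_{i+1}$ sit consecutively with $c_{i+1}$ at the outer end of $R_{i+1}$, and $f^{k_i}$ carries them diffeomorphically onto $A_i,I_i,R_i$; the sliding-case bounds $|R_i|\ge|A_i|$ and $|R_i|\ge|I_i|$ provide definite space on both sides of $I_i$.

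The core analytic ingredient I would use is the asymptotic
\[
|f^{k_i}(x)-f^{k_i}(c_{i+1})|\;\approx\;|Df^{k_i-1}(f(c_{i+1}))|\cdot|x-c_{i+1}|^{l(c_{i+1})}
\]
for $x\in T_{i+1}$ near $c_{i+1}$. The exponent comes from non-flatness of $f$ at $c_{i+1}$, and the derivative factor is the bounded-distortion estimate for $f^{k_i-1}$ on $f(T_{i+1})$; the Koebe principle delivers this distortion bound with a constant independent of $i$, thanks to the negative Schwarzian derivative and the definite scaled neighborhood of $I_i$ provided by $R_i$ and $A_i$ inside $T_i$. Evaluating at the two endpoints of $I_{i+1}$ produces
\[
|I_i|\;\approx\;|Df^{k_i-1}(f(c_{i+1}))|\,\bigl[(|R_{i+1}|+|I_{i+1}|)^{l(c_{i+1})}-|R_{i+1}|^{l(c_{i+1})}\bigr],
\]
which I would invert to bound $|I_{i+1}|$ from above in terms of $|I_i|$, $|Df^{k_i-1}(f(c_{i+1}))|$ and the auxiliary lengths. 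The passage from those auxiliary lengths to the factor $|f^{k_i}(c_{i+1})-\tilde{c}|$ uses non-flatness at $\tilde{c}$ once more: $f^{k_i}(c_{i+1})$ lies on $\partial T_i$ near the critical point $\tilde{c}$ that blocks further extension of the diffeomorphism domain, and $|Df(f^{k_i}(c_{i+1}))|\approx|f^{k_i}(c_{i+1})-\tilde{c}|^{l(\tilde{c})-1}$ converts $|Df^{k_i-1}(f(c_{i+1}))|$ into $|Df^{k_i}(f(c_{i+1}))|$ at the cost of the indicated power of $|f^{k_i}(c_{i+1})-\tilde{c}|$.

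The step I expect to be the main obstacle is making the Koebe distortion constant uniform in $i$: one must verify the scaled-neighborhood hypothesis at each of the $s$ layers of the nested construction $T^0\subset\cdots\subset T^s$, which hinges on the sliding-case inequalities $|R_i|\ge|A_i|\ge|I_i|$ surviving the descent, together with the negative Schwarzian derivative. A secondary technical concern is the dichotomy between $|I_{i+1}|\le|R_{i+1}|$ and $|I_{i+1}|>|R_{i+1}|$ in the inversion step, which produces two superficially different exponents; matching to the single form $(\cdot)^{1/l(c_{i+1})}$ stated requires treating both regimes uniformly, exploiting that in the sliding case $|I_{i+1}|$ and $|R_{i+1}|$ are always comparable up to a constant absorbable into $K_2$. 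Once these points are secured, the per-step inequality is taken as a product over $i=0,\dots,s-1$ to produce the announced bound on $|I_s|/|f^n(I)|$.
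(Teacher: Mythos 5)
Your overall architecture (per-step estimate telescoped over the nested levels, non-flatness at $c_{i+1}$, distortion control of $f^{k_i-1}$, and the factorization $|Df^{k_i}(f(c_{i+1}))|=|Df^{k_i-1}(f(c_{i+1}))|\,|Df(f^{k_i}(c_{i+1}))|$) is the right one; note, however, that the paper does not reprove this lemma at all -- its ``proof'' is a deferral to Proposition 1.3.1 of \cite{MR1858488}, with the remark that that argument uses only the (one-sided) Koebe principle, the order conditions, and $|A_i|<|R_i|$. Measured against that, your sketch has a genuine gap at its core: the two-sided asymptotic $|f^{k_i}(x)-f^{k_i}(c_{i+1})|\approx |Df^{k_i-1}(f(c_{i+1}))|\,|x-c_{i+1}|^{l(c_{i+1})}$ with constants uniform in $i$ is not available. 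The scaled-neighborhood hypothesis you invoke fails: the construction only guarantees $|I_i|\le |R_i|$ and $|A_i|<|R_i|$ for $0\le i\le s-1$, so there is no lower bound on $|A_i|/|I_i|$ for $i\ge 1$, and on the other side of $A_i$ there is no controlled room at all, since $f^{k_i}(c_{i+1})=f^{n_i}(\alpha_i)$ is an endpoint of $T_i$. Hence only the one-sided Koebe principle applies, and it yields one-directional inequalities (e.g. $|f(R_{i+1})|\le K_o\,|A_i|/|Df^{k_i-1}(f(c_{i+1}))|$, exactly the form used in the proof of Lemma \ref{sigma}); the derivative at $f(c_{i+1})$ cannot be bounded below by average slopes with only far-side space, so the ``$\approx$'' identity you evaluate at the endpoints of $I_{i+1}$ is not justified, and the argument must be reorganized around the one-sided bounds.

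Two further steps are also not secured. First, your regime-matching relies on ``$|I_{i+1}|$ and $|R_{i+1}|$ are always comparable up to a constant'', which is false: the sliding construction gives only $|I_{i+1}|\le |R_{i+1}|$, and the ratio $|R_{i+1}|/|I_{i+1}|$ is unbounded; inverting your endpoint identity then yields bounds of the shape $|I_{i+1}|\lesssim \big(|I_i|/|Df^{k_i-1}(f(c_{i+1}))|\big)^{1/l(c_{i+1})}$ rather than the inequality linear in $|I_i|$ that the telescoping needs, unless extra comparisons involving $|R_{i+1}|$ and the distance to $C$ are supplied. Second, the distance factor does not come from non-flatness at $\tilde c$ alone: the obstruction to extending the branch occurs at time $n_{i+1}$ (where $f^{n_{i+1}}(\alpha_i)=c_{i+1}$), and the point $f^{k_i}(c_{i+1})=f^{n_i}(\alpha_i)$ need not lie near any critical point. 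The conversion $|Df(f^{k_i}(c_{i+1}))|\approx|f^{k_i}(c_{i+1})-\tilde c|^{l(\tilde c)-1}$ is harmless, but one must in addition compare the auxiliary lengths ($|A_i|$, $|R_{i+1}|$) with $|f^{k_i}(c_{i+1})-\tilde c|$, in the spirit of the level-zero inequality (\ref{11}); that comparison, which is where the exponent $l(\tilde c)-l(c_{i+1})$ actually originates, is absent from your plan. So the proposal identifies the right ingredients but, as written, does not constitute a proof; the missing steps are precisely the content of the cited Proposition 1.3.1.
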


\begin{proof}
This Lemma is almost the same as  Proposition 1.3.1 in
\cite{MR1858488}. Since the proof of the Lemma only uses the Koebe
principle, orders of the critical points and $|A_i|<|R_i|$ for $0
\leq i \leq s-1$, the result is also ture for $f\in \mathcal{A}$.
\end{proof}

Now we  compare $I$ with $I_s$. According to the stopping rules, we consider three cases:
\begin{enumerate}
\item If $n_s=0$,  then $I=I_s$ by the definition of $I_s$, it follows
that
\begin{equation} \label{1}
I\subset E_0(c_s, \frac{1}{2}|I_s|).
\end{equation}

\item If $|I_s|> |R_s|$, then we have
\begin{equation}\label{2}
I\subset f^{-n_s}(I_s) \subset  f^{-n_s}(I_s \cup R_s) \subset
E_{n_s}(c_s, 2|I_s|).
\end{equation}
\item If $|I_s|\leq|R_s|\leq |A_s|$, we use the 'sliding'
technology developed by Nowicki and van Strien in
\cite{MR1109621}. Let $J \subset T^s$ be an interval such that
$|J|=|I|$ and $G:=f^{n_s}(J)\subset I_s\cup R_s$ be adjacent to
$c_s$, since $|I_s|\leq|R_s|\leq |A_s|$, there exists constant
$K_o$ such that
$$
\frac{|G|}{|J|} \leq K_o \frac{|f^{n_s}(I)|}{|I|}
$$
by the one-side Koebe principle. Since  $|I|=|J|$, this gives that
$|G|\leq K_o|f^{n_s}(I)|$. Then
\begin{equation}\label{3}
J\subset E_{n_s}(c_s, K_o |I_s|), \ and \ |I| \leq |E_{n_s}(c_s,
K_o |I_s|)|.
\end{equation}
\end{enumerate}

\begin{lemma}\label{sigma}
One can choose $\sigma$ so small that the interval $T_i$ in (\ref{nested}) have
size less than $\sigma$ for $0 \le i<s$.
\end{lemma}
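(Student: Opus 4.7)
The plan is to prove the lemma by induction on $i$, exploiting the nested structure of the $T^i$'s together with the first summability condition (\ref{the first summability}) and the one-sided Koebe principle. The key structural facts are that each $T_i = f^{n_i}(T^i)$ has a genuine critical point $c_i := f^{n_i}(\alpha_{i-1}) \in C$ at one boundary and the iterate $f^{k_i}(c_{i+1}) = f^{n_i}(\alpha_i)$ at the other, and that for $i < s$ the components of $T_i \setminus I_i$ satisfy $|A_i|, |I_i| \le |R_i|$, so $|T_i| \le 3|R_i|$.

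For the base case $i = 0$, the sliding hypothesis together with $|A_0| \ge |I_0|$ forces $f^n(\alpha_0) \notin B(c, 3\epsilon)$ while still inside the controlled preimage $I''$ of $B(c,\sigma)$, so one side of $T_0$ has length at most $\sigma - 3\epsilon$. On the side of $T^0$ that is not bounded by the critical point of $f^n$ in $I'' \setminus I'$, the previous-step induction hypothesis (\ref{induction}) of Proposition 1 (available for $k < n$) controls how far $T^0$ can extend, so choosing $\sigma$ small from the outset forces $|T_0| < \sigma$. For the inductive step, the factorization $f^{n_{i-1}} = f^{k_{i-1}} \circ f^{n_i}$ exhibits $f^{k_{i-1}}$ as a diffeomorphism from $U_i := f^{n_i}(T^{i-1}) \subset T_i$ onto $T_{i-1}$, so by Koebe $|U_i|$ is bounded by a constant multiple of $|T_{i-1}| < \sigma$. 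The remaining piece $T_i \setminus U_i$ is essentially the extension component $R_i$, and its size is controlled through the identification $R_i = f^{k_i}(L_{i+1})$ with $L_{i+1} \subset A_{i+1}$, combined with the critical-order behavior $|Df(x)| \approx |x-c|^{l(c)-1}$ near $c \in C$ and the one-sided Koebe principle applied around $c_{i+1} \in \partial R_{i+1}$. This yields an estimate of precisely the form of the summand in (\ref{the first summability}). Summing and using that the summability tail tends to $0$ as $\sigma \to 0$ (since $V(\sigma) \to \infty$), one concludes $|T_i| < \sigma$ for all $i < s$.

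The main obstacle will be matching the iterative distortion estimate to the precise shape of (\ref{the first summability}), especially when the critical orders $l(c_i)$ and $l(c_{i+1})$ differ. The numerator $|f^n(c) - \tilde{c}|^{l(\tilde{c})}$ and denominator $|f^n(c) - \tilde{c}|^{l(c)}|Df^n(f(c))|$ of (\ref{the first summability}) are arranged exactly to accommodate this asymmetry, tracking both the order at the critical point being approached and the order at the one whose orbit is followed. A secondary subtlety is that the Koebe principle is only one-sided at each step, applied on the side of $c_{i+1}$ where $L_{i+1}$ (rather than $R_{i+1}$) lies; the fact that $R_i$ is the component containing the critical point in its closure, together with $|A_i| < |R_i|$, supplies the Koebe space needed. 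Careful bookkeeping of the exponents along the chain, guided by the product estimate in Lemma 1 above, should deliver the telescoping bound.
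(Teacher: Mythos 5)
Your overall skeleton (induction on $i$, the identity $|T_i|\le 3|R_i|$ from the stopping rule, one-sided Koebe plus the first summability condition, and $V(\sigma)\to\infty$) matches the paper, but the inductive step contains a fatal error exactly where the lemma's real content lies. You claim that since $f^{k_{i-1}}$ maps $U_i:=f^{n_i}(T^{i-1})$ diffeomorphically onto $T_{i-1}$, ``by Koebe $|U_i|$ is bounded by a constant multiple of $|T_{i-1}|$.'' The Koebe principle controls distortion (ratios of derivatives), not the absolute size of a pullback: $U_i$ has the critical point $c_i=f^{n_i}(\alpha_{i-1})$ as an endpoint, where $|Df^{k_{i-1}}(x)|\approx |Df^{k_{i-1}-1}(f(x))|\,|x-c_i|^{l(c_i)-1}$ can be very small, so $|U_i|$ may be far larger than $|T_{i-1}|$. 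Bounding this piece is precisely where the paper must use the order of $c_i$, the one-sided Koebe estimate $|f(R_i)|\lesssim |A_{i-1}|/|Df^{k_{i-1}-1}(f(c_i))|$, and the first summability condition (\ref{the first summability}) together with $k_{i-1}\ge v(\sigma)$, to get $|T_i|\le K|R_{i-1}|(\cdot)^{1/l(c_i)}\le |R_{i-1}|\le\sigma$; a distortion argument alone can never do this, otherwise the summability hypothesis would not be needed in this lemma at all.

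Moreover, your geometry is reversed: from $f^{k_{i-1}}(R_i)=A_{i-1}$ and $f^{k_{i-1}}(L_i)=R_{i-1}$ one sees $U_i=R_i\cup I_i\cup L_i$, so $R_i\subset U_i$ and the new piece $T_i\setminus U_i$ lies inside $A_i$ (the extension is on the $A$-side), not ``essentially $R_i$.'' That piece needs no summability estimate, since $|A_i|<|R_i|$ for $i<s$ by the stopping rule; you have attached the summability input to the harmless piece and left the genuinely dangerous piece to Koebe. Your identification $R_i=f^{k_i}(L_{i+1})$ also refers to level-$(i+1)$ objects and does not help bound a level-$i$ quantity in a forward induction. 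Finally, the base case: the induction hypothesis (\ref{induction}) of Proposition 1 bounds the Lebesgue measure of the preimage sets $E_k(c,\epsilon)$ in the domain and says nothing about the length of the image interval $T_0=f^n(T^0)$; what the sliding definition actually gives is that one endpoint of $T^0$ lies in $I''$, so the smaller component $A_0$ satisfies $|A_0|\le\sigma-\epsilon$ and hence, as in (\ref{11}), $|f^{k_0}(c_1)-c|<\sigma$, which is what the induction and the bound $k_0\ge v(\sigma)$ require.
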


\begin{proof}
Because $v(\sigma)\rightarrow \infty$ as $\sigma\rightarrow 0$ and
$ { \Big{(} \frac { {|f^n(c)-\tilde{c}|}^{l(\tilde{c})} }
{{|f^n(c)-\tilde{c}|}^{l(c)}|Df^n(f(c))|} \Big{)}
}^{1/l(c)}\rightarrow 0 $ as $n\rightarrow \infty$, we can
choose $\sigma$ small enough so that for each $n\geq v(\sigma)$
one has
$${ \Big{(} \frac { {|f^n(c)-\tilde{c}|}^{l(\tilde{c})} }
{{|f^n(c)-\tilde{c}|}^{l(c)}|Df^n(f(c))|} \Big{)} }^{1/l(c)} \leq
\frac{1}{K},$$ where $K$ is a positive constant depending on $K_l,
K_o, l(c)$, and the finiteness of the number of critical points. Let
us show by induction that for small $\sigma$, $|R_i\cup I_i\cup
A_i|\leq \sigma$ for $i=0,...,s-1$. Assume that $s\geq 2$,
$|T_0|\leq \sigma$ follows from the definition. Observe that
\begin{equation}\label{11}|A_0|\leq |f^{k_0}(c_1)-c_0|\leq|A_0\cup I_0|\leq 2|A_0|.\end{equation}

We have
\begin{equation*}
\begin{split}
|T_1|&=|R_1\cup I_1 \cup A_1| \leq 3|R_1|\\
&\leq 3{K_l}^{1/l(c_1)}{|f(R_1)|}^{1/l(c_1)}      \ \   (oder\ of\ critical\ point)\\
&\leq 3{K_l}^{1/l(c_1)}{K_o}^{1/l(c_1)}
{\Big(\frac{|f(A_0)|}{|Df^{k_1}(f(c_1))|}\Big)}^{1/l(c_1)}  \ \   (one\ side\ Koebe\ principle)\\
&\leq 3{K_l}^{2/l(c_1)}{K_o}^{1/l(c_1)} {\Big(\frac{{|A_0|}^{l(c_0)}}{|Df^{k_1}(f(c_1))|}\Big)}^{1/l(c_1)}  \ \   (oder\ of\ critical\ point)\\
&=3{K_l}^{2/l(c_1)}{K_o}^{1/l(c_1)}|A_0| {\Big(\frac{{|A_0|}^{l(c_0)-l(c_1)}}{|Df^{k_1}(f(c_1))|}\Big)}^{1/l(c_1)} \\
&\leq 3{K_l}^{2/l(c_1)}{K_o}^{1/l(c_1)}|A_0| {\Big(\frac{{|f^{k_0}(c_1)-c_0|}^{l(c_0)-l(c_1)}}{|Df^{k_1}(f(c_1))|}\Big)}^{1/l(c_1)} \ \ \ \ \ \  (\ref{11})\\
&:=K |A_0|
{\Big(\frac{{|f^{k_0}(c_1)-c_0|}^{l(c_0)-l(c_1)}}{|Df^{k_1}(f(c_1))|}\Big)}^{1/l(c_1)}
\leq |A_0|<\sigma,
\end{split}
\end{equation*}

Similarly we can get for $2\leq i<s$,
\begin{equation*}
\begin{split}
 & |R_i\cup I_i \cup A_i| \leq 3|R_i| \leq
3{K_l}^{1/l(c_i)}{|f(R_i)|}^{1/l(c_i)}\\
&\leq 3{K_l}^{2/l(c_i)}{K_o}^{1/l(c_i)}
{\Big(\frac{|f(A_{i-1})|}{|Df^{k_{i-1}}(f(c_i))|}\Big)}^{1/l(c_i)}\\
&\leq 3{K_l}^{2/l(c_i)}{K_o}^{1/l(c_i)}
{\Big(\frac{{|A_{i-1}|}^{l(c_{i-1})}}{|Df^{k_{i-1}}(f(c_i))|}\Big)}^{1/l(c_i)}\\
& \leq 3{K_l}^{2/l(c_i)}{K_o}^{1/l(c_i)} |R_{i-1}|
{\Big(\frac{{|R_{i-1}|}^{l(c_{i-1})-l(c_i)}}{|Df^{k_{i-1}}(f(c_i))|}\Big)}^{1/l(c_i)}\\
& \leq 3{K_l}^{2/l(c_i)}{K_o}^{1/l(c_i)} |R_{i-1}|
{\Big(\frac{{|f^{k_{i-1}}(c_i)-c_{i-1}|}^{l(c_{i-1})-l(c_i)}}{|Df^{k_{i-1}}(f(c_i))|}\Big)}^{1/l(c_i)}\\
&:=K |R_{i-1}|{\Big(\frac{{|f^{k_{i-1}}(c_i)-c_{i-1}|}^{l(c_{i-1})-l(c_i)}}{|Df^{k_{i-1}}(f(c_i))|}\Big)}^{1/l(c_i)}\\
 &\leq |R_{i-1}|\leq \sigma,
\end{split}
\end{equation*}
where the last three inequalities are followed from the induction
assumption $(k_{i-1}>v(\sigma)$ and  $|T_{i-1}|\leq \sigma)$ and the
following relation
$$\frac{1}{3}|f^{k_{i-1}}(c_i)-c_{i-1}|\leq |R_{i-1}| \leq
|f^{k_{i-1}}(c_i)-c_{i-1}|.
$$
So we finish the proof of the Lemma.
\end{proof}

For every $s$-tuple $(k_0, k_1,...,k_{s-1})$, there are at most
${(2\sharp C)}^s$ intervals $I$ such that $f^{n_s}(I)$ slides to
the same interval $G$. Furthermore, by Lemma \ref{sigma}, it
follows that  $k_i \geq v(\sigma)$ for all $0\leq i <s$ from the
definition of $v(\sigma)$. Therefore, by (\ref{1}), (\ref{2}) and
 (\ref{3}), there exists a constant $K_S>4$ (depending on $K_o$)
such that
\begin{equation} \label{slinding}
\begin{split}
&\sum_{I\in \mathcal{S}_n(c)} |I| \leq  \sum_{c_s\in C} \sum_{k_j
\geq v(\sigma), \sum_jk_j=n-n_s\leq n } {(2\sharp C)}^s
|E_{n_s}(c_s, \frac{K_S}{2} |I_s|)|. \\
\end{split}
\end{equation}

{\bf The transport case:} Suppose $I\in \mathcal{T}_n(c)$, by the
definition of $\mathcal{T}_n(c)$, $f^n$ has at least one critical
point in $I'\supset I$. Let $k<n$ be the maximal integer such that
$f^k(I')$ contains a critical point $\tilde{c}$,  and denote the set
of such kind of intervals by $\mathcal{T}_n^k(\tilde{c}, c)$.
Clearly, $f^{n-k-1}$ maps $f^{k+1}(I')$ diffeomorphically into $B(c,
3\epsilon)$.

\begin{lemma} There exists $K_\mathcal{T} >0$ such that
\begin{equation}\label{transport}
\begin{split}
\sum_{I\in \mathcal{T}_n(c)} |I| &\leq \sum_{n-k\geq v(\sigma)}
\sum_{\mathcal{T}_n^k(\tilde{c}, c)} |I| \\
&\leq \sum_{\tilde{c}\in C}\sum_{n-k\geq
v(\sigma)}|E_{k}\big(\tilde{c},\ \ K_{\mathcal{T}}\epsilon
\frac{{|f^{n-k}(\tilde{c})-c|}^{(l(c)-l(\tilde{c}))/l(\tilde{c})}}{|Df^{n-k}(f(\tilde{c}))|^{1/l(\tilde{c})}}
\big{)}|.
\end{split}
\end{equation}
\end{lemma}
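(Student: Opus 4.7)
My plan is to show that each $I\in\mathcal{T}_n^k(\tilde c,c)$ is contained in a component of the preimage $E_k(\tilde c,r)$, with $r$ equal to the radius appearing in the statement; the $I$'s being disjoint, this gives $\sum_{I\in\mathcal{T}_n^k(\tilde c,c)}|I|\le|E_k(\tilde c,r)|$, and the lemma follows after summing over $\tilde c\in C$ and $k$. The first inequality is a bookkeeping regrouping: each $I\in\mathcal{T}_n(c)$ determines a unique maximal $k<n$ and critical point $\tilde c$ with $\tilde c\in f^k(I')$, and the restriction $n-k\ge v(\sigma)$ is forced by $f^{n-k}(\tilde c)\in f^n(I')\subset B(c,3\epsilon)\subset B(c,\sigma)$ together with the definition of $v(\sigma)$.

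The analytic core splits into three steps. First, since $f^{n-k-1}$ is a diffeomorphism on $f^{k+1}(I')$ (by maximality of $k$) and $f^n(I)\subset B(c,\epsilon)$ sits inside $f^n(I')\subset B(c,3\epsilon)$ with at least $2\epsilon$ of room on each side, the Koebe principle gives $|f^{k+1}(I)|\le 2K_o\epsilon/|Df^{n-k-1}(f(\tilde c))|$; since $f(\tilde c)$ is an endpoint of $f^{k+1}(I')$, the whole of $f^{k+1}(I)$ lies within distance $\rho:=6K_o\epsilon/|Df^{n-k-1}(f(\tilde c))|$ of $f(\tilde c)$. Second, applying the critical order $|f(x)-f(\tilde c)|\approx|x-\tilde c|^{l(\tilde c)}$ inverts this to $f^k(I)\subset B(\tilde c,K_l\rho^{1/l(\tilde c)})$. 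Third, using the chain rule $|Df^{n-k}(f(\tilde c))|=|Df^{n-k-1}(f(\tilde c))|\cdot|f'(f^{n-k}(\tilde c))|$ together with the critical order at $c$, $|f'(f^{n-k}(\tilde c))|\approx|f^{n-k}(\tilde c)-c|^{l(c)-1}$, the radius becomes
$$K_l\rho^{1/l(\tilde c)}\approx \epsilon^{1/l(\tilde c)}\cdot\frac{|f^{n-k}(\tilde c)-c|^{(l(c)-1)/l(\tilde c)}}{|Df^{n-k}(f(\tilde c))|^{1/l(\tilde c)}}.$$
The leftover factor $\epsilon^{1/l(\tilde c)-1}$ is absorbed via $|f^{n-k}(\tilde c)-c|\le 3\epsilon$ and the inequality $\epsilon^{(1-l(\tilde c))/l(\tilde c)}\le 3^{(l(\tilde c)-1)/l(\tilde c)}|f^{n-k}(\tilde c)-c|^{(1-l(\tilde c))/l(\tilde c)}$ (valid because the exponent is nonpositive for $l(\tilde c)\ge 1$), which shifts the exponent of $|f^{n-k}(\tilde c)-c|$ from $(l(c)-1)/l(\tilde c)$ to $(l(c)-l(\tilde c))/l(\tilde c)$ and produces precisely the radius $r$ of the lemma after collecting all universal constants into a single $K_\mathcal{T}$.

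The principal obstacle is the first step: verifying that $f^n(I')$ really contains $f^n(I)$ with comparable Koebe space on each side (which demands a careful look at the definition of the component $I'$ of $E_n(c,3\epsilon)$, especially when $I'$ runs into the boundary of $M$ or into another discontinuity), and then applying the Koebe principle to the diffeomorphism $f^{n-k-1}$ under the negative-Schwarzian hypothesis despite possibly many bounded-derivative points along the orbit. Once this is in place, steps (ii) and (iii) are mainly bookkeeping with the one-sided orders, provided one notes that $f^k(I)$ lies entirely on one side of $\tilde c$ inside $f^k(I')$, so the appropriate one-sided value of $l(\tilde c)$ is the one that enters the estimate.
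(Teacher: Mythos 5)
Your proposal is correct and follows essentially the same route as the paper's proof: one-sided Koebe applied to $f^{n-k-1}$ on $f^{k+1}(I')$ (using the space of $B(c,3\epsilon)$ around $B(c,\epsilon)$ and maximality of $k$), inversion through the one-sided order at $\tilde c$, the chain rule together with the order at $c$ to trade $|Df^{n-k-1}(f(\tilde c))|$ for $|Df^{n-k}(f(\tilde c))|\,|f^{n-k}(\tilde c)-c|^{1-l(c)}$, absorption of the leftover powers of $\epsilon$ via $|f^{n-k}(\tilde c)-c|\le 3\epsilon$ and $l(\tilde c)\ge 1$, and finally the inclusion $I\subset E_k(\tilde c, r)$ summed over $\tilde c\in C$ and $n-k\ge v(\sigma)$. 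The Koebe-space verification you flag as the main obstacle is treated in the paper only by the same brief assertion (an interval $[x,y]\supset f^{k+1}(I')$ mapped by $f^{n-k-1}$ diffeomorphically onto $B(c,3\epsilon)$), so your sketch is essentially at the same level of detail as the paper's argument.
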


\begin{proof} Observe that $f^{n-k-1}$ maps $f^{k+1}(I')$ diffeomorphically into $B(c, 3\epsilon)$, $f(\tilde{c}) \in f^{k+1}(I')\subset [x,y]$,
where $f^{n-k-1}$ maps $[x, y]$ diffeomorphically onto $B(c,
3\epsilon)$, and $B(c, 3\epsilon)$ contains a $1$-scaled
neighborhood of $B(c, \epsilon)$. One can get from the one-side
Koebe principle that there exists $K_o$ such that
\begin{equation*}
\frac{|f^n(I)|}{|f^{k+1}(I)|}\geq K_o |Df^{n-k-1}(f(\tilde{c}))|,
\end{equation*}
i.e.,
$$
|f^{k+1}(I)| \leq
K_o\Big(\frac{|f^n(I)|}{|Df^{n-k-1}(f(\tilde{c}))|}\Big).
$$ From the
orders of the critical points, it follows that
$$
|f^{k}(I)| \leq K_l
{K_o}^{1/l(\tilde{c})}{\Big(\frac{|f^n(I)|}{|Df^{n-k-1}(f(\tilde{c}))|}\Big)}^{1/l(\tilde{c})}.
$$
Since $f^{n-k}(\tilde{c}) \in B(c, 3\epsilon)$, the Chain Rules
and orders of the critical points indicate that
\begin{equation*}
\begin{split}
|Df^{n-k}(f(\tilde{c}))|  & =|Df^{n-k-1}(f(\tilde{c}))|
|Df(f^{n-k-1}(f(\tilde{c})))| \\
& \leq K_l|Df^{n-k-1}(f(\tilde{c}))|
{|f^{n-k-1}(f(\tilde{c}))-c|}^{l(c)-1}.
\end{split}
\end{equation*}
Therefore, there exists a constant $K_3$(depending $K_o$ and
$K_l$) such that
\begin{equation}\label{transport2}
\begin{split}
|f^k(I)| &\leq K_l
{K_o}^{1/l(\tilde{c})}{\Big(\frac{K_l|f^n(I)|}{|Df^{n-k}(f(\tilde{c}))|{|f^{n-k}(\tilde{c})-c|}^{1-l(c)}}\Big)}^{1/l(\tilde{c})}
\\
&=K_l
{K_o}^{1/l(\tilde{c})}{\Big(\frac{K_l|f^n(I)|{|f^{n-k}(\tilde{c})-c|}^{l(c)-l(\tilde{c})}{|f^{n-k}(\tilde{c})-c|}^{l(\tilde{c})-1}}{|Df^{n-k}(f(\tilde{c}))|}\Big)}^{1/l(\tilde{c})}
\\&\leq K_3\epsilon
{\Big(\frac{{|f^{n-k}(\tilde{c})-c|}^{l(c)-l(\tilde{c})}}{|Df^{n-k}(f(\tilde{c}))|}\Big)}^{1/l(\tilde{c})}.
\end{split}
\end{equation}
Here the last inequality follows from the following relationship
$$
|f^n(I)|\leq 2\epsilon, \ l(\tilde{c}) \ge 1 \ \ and \
|f^{n-k}(\tilde{c})-c|\leq 3\epsilon.
$$
 Since
the number of critical points is finite and inequality
(\ref{transport2}), we know that there exists a constant
$K_{\mathcal{T}}$ such that for all critical point $\tilde{c} \in
C$,
\begin{equation*} \begin{split} I & \subset  f^{-k}(f^k(I))\\
 & \subset E_{k}\Big{(}\tilde{c},\ \
K_{\mathcal{T}}\epsilon
\frac{{|f^{n-k}(\tilde{c})-c|}^{l(c)/l(\tilde{c})-1}}{|Df^{n-k}(f(\tilde{c}))|^{1/l(\tilde{c})}}
\Big{)}.
\end{split}\end{equation*}
Since the definition of $v(\sigma)$ implies $n-k>
v(\sigma)$, summing over all such $I$ gives Lemma 2.

\end{proof}

We proceed to show Proposition 1, by  Lemma 4.9 in
\cite{MR1239171} and the summability conditions, one can choose
$\sigma$ so small that for $n>1$,
\begin{equation}\label{small sigma 1}
\sum_{c_s\in C} \sum_{k_j \geq v(\sigma), \sum_jk_j=n-n_s\leq n }
3K_S\prod_{i=0}^{s-1} (2\sharp C) K_2{\Big{(} \frac{
{|f^{k_i}(c_{i+1})-\tilde{c}|}^{l(\tilde{c})- l(c_{i+1})}}
{|Df^{k_i}(f(c_{i+1}))|} \Big{)} }^{1/l(c_{i+1})}\leq 1,
\end{equation}
and
\begin{equation} \label{small sigma 2}
\sum_{ \tilde{c}\in C}\sum_{n-k\geq v(\sigma)} 3K_{\mathcal{T}}
{\Big(\frac{{|f^{n-k}(\tilde{c})-c|}^{l(c)-l(\tilde{c})}}{|Df^{n-k}(f(\tilde{c}))|}\Big)^{1/l(\tilde{c})}}\leq
1.
\end{equation}
 So, for $n-k\geq v(\sigma)$, we have the following inequalities,
\begin{equation}\label{IS}
\frac{K_S}{2}|I_s|\leq \frac{\sigma}{3},
\end{equation}
and
\begin{equation}\label{IT} K_{\mathcal{T}} \epsilon
\Big(\frac{{|f^{n-k}(\tilde{c})-c|}^{l(c)-l(\tilde{c})}}{|Df^{n-k}(f(\tilde{c}))|}\Big)^{1/l(\tilde{c})}
\leq \frac{\sigma}{3}.
\end{equation}
 With the notations from the beginning of the proof, we have
$$
E_n(c, \epsilon)=\bigcup_{I\in \mathcal{R}_n}I \cup \bigcup_{I\in
\mathcal{S}_n}I \cup \bigcup_{I\in \mathcal{T}_n}I,
$$
and
$$
|E_n(c, \epsilon)|\leq \sum_{I\in \mathcal{R}_n}|I| + \sum_{I\in
\mathcal{S}_n}|I| + \sum_{I\in \mathcal{T}_n}|I|.
$$
Therefore, by (\ref{regular}), (\ref{slinding}),
(\ref{transport}), (\ref{IS}), and (\ref{IT}),  and using the
induction hypothesis we have
\begin{equation*}
\begin{split} & |E_n(c, \epsilon)| \leq  \frac{ K_\mathcal{R}}{\sigma}
\epsilon \ \ + \\
&\sum_{c_s\in C} \sum_{k_j \geq v(\sigma), \sum_jk_j=n-n_s\leq n }
K_S \frac{3K_R}{\sigma} \epsilon \prod_{i=0}^{s-1} K_2(2\sharp C){
\Big{(} \frac { {|f^{k_i}(c)-\tilde{c}|}^{l(\tilde{c})- l(c)} }
{|Df^{k_i}(f(c_i))|} \Big{)} }^{1/l(c)}  \\
&\ +\ \sum_{\tilde{c }\in C}\sum_{n-k \geq v({\sigma})}
K_{\mathcal{T}} \frac{3K_R}{\sigma} \epsilon
\frac{{|f^{n-k}(\tilde{c})-c|}^{l(c)/l(\tilde{c})-1}}{|Df^{n-k}(f(\tilde{c}))|^{1/l(\tilde{c})}}.
\end{split}
\end{equation*}
Then by the choice of $\sigma$, and by (\ref{small sigma 1}), and
(\ref{small sigma 2}), we obtain for $\epsilon <\frac{\sigma}{3}$,
$$
|E_n(c, \epsilon)| \leq \frac{ 3K_\mathcal{R}}{\sigma} \epsilon,
$$
which completes the proof.
\end{proof}

\vspace{0.3cm}

 {\bf Proposition 2:} Let $A$ be any measurable set,
then there is a constant $K_5>0$ such that

$$|f^{-n}(A)| \leq K_5 {|A|}^{1/l_{\max}}.$$

\begin{proof} Since Proposition 1 says that there is a control of the measure
of the preimages of a small neighborhood containing a critical
point, the proof can be divided into two parts. The first part is
to bound the measure of the preimages of a small intervals ``at
the end of branches" by the measure of the preimages of a small
neighborhood containing a critical point via the following Lemma.
\begin{lemma} \label{branch interval} (Lemma 1.2.1 in \cite{MR1858488})
Let $f \in \mathcal{A}$, there exists $K_4>0$ such that any
interval $I$ for which $f^n|_I$ is monotone and continuous,
$m(f^n(I))\leq \epsilon$ and one of the boundary points of $I$ is
a critical point of $f^n$, we have
 $$I\subset E_i(c,
 K_4\Big(\frac{{\epsilon}}{{|Df^{n-i-1}(f(c))|}}\Big)^{1/l(c)})$$
 for some $0\leq i \leq n$($i=n$ put $Df^{n-i-1}(f(c))=1$ ) and $c\in C$.
\end{lemma}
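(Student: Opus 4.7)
The plan is to follow the distinguished endpoint of $I$ forward under $f$ until it lands on a genuine critical point of $f$, and then convert the hypothesis $|f^n(I)|\le\epsilon$ into a metric bound at that moment. Let $p$ be the boundary point of $I$ that is a critical point of $f^n$, and let $i$ be the smallest integer in $\{0,1,\ldots,n\}$ with $f^i(p)=c\in C$. Writing $I_j:=f^j(I)$, monotonicity of $f^n|_I$ carries over to each $f^j|_I$, and $c$ sits at one boundary of $I_i$. The degenerate case $i=n$ is immediate since $I_n\subset B(c,\epsilon)\subset B(c,K_4\epsilon^{1/l(c)})$ whenever $K_4\ge1$ and $\epsilon\le1$, so $I\subset E_n(c,K_4\epsilon^{1/l(c)})$. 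Thus I may assume $i<n$, and the task reduces to proving
$$|I_i|\;\le\;K_4\left(\frac{\epsilon}{|Df^{n-i-1}(f(c))|}\right)^{1/l(c)}.$$

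The bound on $|I_i|$ splits into two pieces that communicate through the iterate $i{+}1$. Using the critical order at $c$ as an endpoint of $I_i$, the asymptotic $|f(x)-f(c)|\approx|x-c|^{l(c)}$ gives $|I_i|\le K_l^{1/l(c)}\,|I_{i+1}|^{1/l(c)}$. By the minimality of $i$, the map $f^{n-i-1}$ is a diffeomorphism on $I_{i+1}$ with $f(c)$ at one endpoint, and a bounded-distortion argument (one-sided Koebe principle together with the negative Schwarzian derivative) converts $|I_n|\le\epsilon$ into $|I_{i+1}|\le K_o\,\epsilon/|Df^{n-i-1}(f(c))|$. Multiplying these bounds and absorbing $K_l^{1/l(c)}K_o^{1/l(c)}$ into a uniform $K_4$ over the finite set $C$ yields the desired inequality, and consequently $I_i\subset B(c,K_4(\epsilon/|Df^{n-i-1}(f(c))|)^{1/l(c)})$, which gives $I\subset E_i(c,\cdot)$ after pulling back by $f^{-i}$.

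The main obstacle is making the one-sided distortion step rigorous. Standard Koebe requires definite space on both sides of the image, whereas here only the side of $I_{i+1}$ opposite $f(c)$ admits a Koebe neighborhood. I would invoke the one-sided Koebe principle recorded in Chapter~5 of \cite{MR1239171}, using negative Schwarzian to rule out interior minima of $|Df^{n-i-1}|$ on $I_{i+1}$. If that minimum is attained at $f(c)$ the desired inequality is automatic; if it is attained at the far endpoint, one partitions $I_{i+1}$ at an interior point and applies one-sided Koebe to each half with a well-chosen base point. Handling this dichotomy uniformly in $n$, $i$ and the choice of critical point, and absorbing the resulting constants into a single $K_4$, is the only genuinely delicate bookkeeping; the remainder of the argument follows directly from the definitions of critical order, the chain rule, and the negative Schwarzian hypothesis.
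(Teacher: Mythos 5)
The paper itself does not prove this lemma --- it is quoted verbatim as Lemma~1.2.1 of \cite{MR1858488} --- so your attempt has to stand on its own, and its central analytic step has a genuine gap. Writing $g=f^{n-i-1}$, your Step~2 claims that negative Schwarzian plus ``one-sided Koebe'' yields $|I_{i+1}|\le K_o\,\epsilon/|Dg(f(c))|$. Taking $\epsilon=|I_n|=|g(I_{i+1})|$ (the worst admissible choice), this is exactly the assertion that the derivative of $g$ at the endpoint $f(c)$ is at most a constant times its \emph{average} derivative over $I_{i+1}$. That does not follow from negative Schwarzian alone: the minimum principle only bounds interior derivatives from below by the smaller of the two endpoint derivatives, and a M\"obius-type map $g(x)=x/(1+Nx)$ on $I_{i+1}=[0,t]$ (zero Schwarzian, perturbable to negative) has $|Dg(0)|=(1+Nt)$ times the average, which is unbounded. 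The one-sided Koebe principle gives the upper bound you need only when there is definite space, \emph{measured in the image}, on the side of $I_n$ away from $f^n(p)$; space adjacent to $f(c)$ (which is what the geometry near a critical value naturally supplies) gives the reverse inequality, a lower bound on $|Dg(f(c))|$. Your proposed dichotomy --- ``if the minimum of $|Dg|$ is at the far endpoint, partition $I_{i+1}$ and reapply one-sided Koebe'' --- breaks down precisely in that case, which is the only problematic one: no partitioning recovers the estimate when $|Dg|$ collapses toward the far endpoint and there is no image space beyond it. Since the lemma's hypotheses ($f^n|_I$ monotone, $|f^n(I)|\le\epsilon$, one endpoint precritical) provide no such space, the step as written is not justified.

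A complete argument has to import the missing space from somewhere. In the paper's actual use of the lemma (Proposition~2), the intervals $J_\pm$ sit at the ends of a branch $J$ of $f^n$ whose image extends beyond $f^n(J_\pm)$ inside $f^n(J)$, and it is this surplus of the branch image that furnishes the one-sided Koebe space; the cited proof in \cite{MR1858488} exploits this kind of extra structure rather than the bare hypotheses you use. So you should either strengthen the hypotheses of your proof to include such image space (and verify it in the application), or follow the argument of the cited lemma. Two smaller points: the fact that $f^{n-i-1}$ is a diffeomorphism on $I_{i+1}$ comes from $I$ lying in a branch of $f^n$ (all intermediate images avoid $C$ in their interiors), not from the minimality of $i$ --- minimality is automatic anyway, since under the summability conditions critical orbits never return to $C$; and the order estimate $|I_i|\le K_l^{1/l(c)}|I_{i+1}|^{1/l(c)}$ uses the asymptotics $|f(x)-f(c)|\approx|x-c|^{l(c)}$, which are only guaranteed on a one-sided neighborhood of $c$, so the case of a long $I_i$ must be disposed of separately (or the constants taken uniform on the whole one-sided branch).
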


 The second part is to relate the measure of the preimages of any small Borel
 sets to the measure of the preimages of the intervals
``at the end of the branches" satisfying Lemma 3 by using the
Minimum principle. We only consider the case $|A|=\epsilon >0$, let
$J=(a, b)$ be a branch of  $f^n$, denote $\mathcal{J}$ by the all of
the branches of $f^n$,  choose $J_-=(a, d_1)$, and $J_+=(d_2, b)$
satisfies $f^n(J_\pm)=\epsilon$. By the Minimum principle (Lemma 4.2
in \cite{MR1239171}), it follows that
$$
|f^{-n}(A)\cap J|\leq K_o|J_- \cup J_+|.
$$
On the other hand, Lemma  \ref{branch interval} indicates that

$$
J_\pm \subset E_{i^{\pm}}\Big(c^{\pm},
 K_4\frac{1}{|Df^{n-i^{\pm}-1}(f(c^{\pm}))|^{1/l(c^{\pm})}}
 {\epsilon}^{1/l(c^{\pm})}\Big),
$$
where $c^+=f^{i^+}(a)$, and $c^-=f^{i^-}(b)$. Thus, by Proposition
1, it follows that
\begin{equation*}
\begin{split}
|f^{-n}(A)|&\leq \sum_{J \in \mathcal{J}}|f^{-n}(A)\cap J|
\leq \sum_{J \in \mathcal{J}}K_o|J_- \cup J_+|\\
&\leq  \sum_{J \in \mathcal{J}} K_o| E_{i^{\pm}}(c^{\pm},
 K_4{\Big(\frac{1}{|Df^{n-i^{\pm}-1}(f(c^{\pm}))|}\Big)}^\frac{1}{l(c^{\pm})} {\epsilon}^{1/l(c^{\pm})})|\\
 &\leq 2 \sum_{c\in C}\sum_{i=0}^{n-1}K_oK_1K_4
 {\Big(\frac{1}{|Df^{n-i-1}(f(c))|}\Big)}^{1/l(c)}{\epsilon}^{1/l(c)}\\
&\leq K_5 {\epsilon}^{1/l_{\max}},
\end{split}
\end{equation*}
 where $l_{\max}$ is the maximum of the orders of the critical
points, the last inequality follows from the second summability
condition (\ref{the second summability}). This implies Proposition
2.

\end{proof}

\section{proof of theorems }
{\it The proof of theorem A.} We argue by contradiction. Suppose
there exists a wandering interval $J$ for $f \in \mathcal {A}$.
Then we have $\sum_{n=0}^{\infty}|f^n(J)|< \infty$. Since $J,
f(J), \cdots, f^n(J), \cdots$
 are disjoint, $|f^n(J)|\rightarrow 0$ as $n \rightarrow
+\infty$. By Proposition 2, we get
$$|J| \leq |f^{-n}(f^{n}(J))| \leq K_5
{|f^{n}(J)|}^{1/l_{\max}}, \ \ \forall \ n>0.$$ Therefore, let $n
\rightarrow \infty$, the above inequality implies that $|J|=0$,
which contradicts to $J$ is a wandering interval. Theorem A is
proved. $\hfill \Box$


{\it The proof of theorem B.} Let $f\in \mathcal{A}$, denote $P_f$
as the Fronbenius-Perron operator induced by $f$. We consider the
sequence of functions $\{P_f^n {\bf 1}\}_{n=0}^\infty$, where
${\bf 1}$ is the constant function on the unit interval $M$. By
Proposition 2, it follows that $\{P_f^n{\bf 1}\}_{n=0}^\infty$ is
uniformly integrable, i.e., for every $\epsilon>0$, there is
$\delta>0$ such that
$$\int_AP_f^n{\bf 1}dx =\int_{f^{-n}(A)}{\bf 1}dx\leq K_5
{|A|}^{1/l_{\max}}\leq \epsilon,$$ if $m(A)\leq \delta$, $n>0$. On
the other hand,  since the $L^1$ norm of $P_f^n{\bf 1}$ is equal
to $1$, by a result from functional analysis \cite{DS},
$\{P_f^n{\bf 1}\}_{n=0}^\infty$ is weakly precompact in $L^1$. The
same applies to the sequence
$\{g_n:=\frac{1}{n}\sum_{j=0}^{n-1}P_f^j{\bf 1}\}$, there is a
subsequence $g_{n_k}$ that converges weakly to $g_*$,
$P_fg_*=g_*$. By the abstract ergodic theorem of Kakutani and
Yosida, $g_{n}$ converges strongly to $g_*$, and $g_*$ is an
invariant density of $f$ \cite{LM}. As a result, the measure
defined by $$\mu(A)=\int_A g_* (x) dx, \ \ \  A\  is\  a\  Borel\
set$$ is an invariant probability measure of $f$.

By Proposition 2,  we have $\mu(A) \leq K_5 {m(A)}^{1/l_{\max}}$.
If $l_{\max}>1$, then the probability density  $g_*$ is a $L^p$
function for $1\leq p < l_{\max}/(l_{\max}-1)$. $\hfill \Box$

\vspace{0.3cm}

 {\it The proof of theorem C.}  For a small $\delta$, denote by
 $N_{\delta}(c):=\{x; |f(x)-f(c)|<\delta \}$ a small neighborhood containing $c$, and
$N_{\delta}:=\cup_{c\in C} N_{\delta}(c)$. Assume that $x,f(x),...,$
$f^{n-1}(x) \notin N_\delta$, and $f^n(x)\in N_\delta$, let $T_n(x)$
be the maximal interval containing $x$ on which $f^n$ is a
diffeomorphism.

 {\bf Claim:} If $f^n(x)\in  N_{\delta}(c)$,
then $B(f^n(x), |N_{\delta}(c)|)\subset f^n(T_n(x)).$

Indeed, let $T_n(x)=(a, b)$, assume without loss of generality that
$|f^n(x)-f^n(a)|\leq |f^n(b)-f^n(x)|$, we then assume
$|f^n(x)-f^n(a)|\leq |N_{\delta}(c)|$ by contradiction. So there is
a critical point $c_j \in C$ and $0\leq n_1<n$ such that
$f^{n_1}(a)=c_j$. The orders of the critical point imply
\begin{equation}\label{nonflat}
|f^{n-n_1}(c_j)-c|\leq 2K_l {\delta}^{1/l(c)}.
\end{equation}
Observe that $x,f(x),...,f^{n-1}(x) \notin N_\delta$ and
$f^{n_1}(x)$ lies in a small neighborhood of $c_j$, then we have
$|f^{n_1+1}(x)- f(c_j)| \geq \delta$. Because $f^{n-n_1-1}$ is a
diffeomorphism on $(f^{n_1+1}(x), f(c_j))$, by the mean value
theorem, there exists $z\in (f^{n_1+1}(x), f(c_j))$ such that
$$
|Df^{n-n_1-1}(z)|= \frac{|f^{n-n_1}(c_j)-f^n(x)|}{|f^{n_1+1}(x)-
f(c_j)|} \leq \frac{ K_l{\delta}^{1/l(c)}}{\delta}=
K_l{\delta}^{-1+1/l(c)}.
$$
On the other hand, since $|f^n(x)-f^n(a)|\leq |f^n(b)-f^n(x)|$ and
$f^{n-n_1-1}(z)\in (f^n(a), f^n(x))$, the one-side Koebe principle
indicates
$$ |Df^{n-n_1-1}(z)| \geq K_o |Df^{n-n_1-1}(f(c_j))|.$$
Therefore, we obtain that
\begin{equation*}
\begin{split}
|Df^{n-n_1}(f(c_j))|&=|Df^{n-n_1-1}(f(c_j))| |Df(f^{n-n_1}(c_j))|
\\&\leq \frac{K_l}{K_o}|Df^{n-n_1-1}(z)|{|f^{n-n_1}(c_j)-c|}^{l(c)-1}
\\& \leq \frac{K_l}{K_o}K_l
{\delta}^{-1+1/l(c)}{(2K_l{\delta}^{1/l(c)})}^{l(c)-1}=\frac{K_l}{K_o}2^{l(c)-1}{K_l}^{l(c)}.
\end{split}
\end{equation*}

Since $|Df^n(f(c_j))|\rightarrow \infty$ as $n \rightarrow \infty$,
the above estimation implies that $n-n_1$ is bounded above by some
integer $n_0$ which does not depend on $\delta$. On the other hand,
the (\ref{nonflat}) indicates that these exists $\delta_0>0$ such
that for any $\delta <\delta_0$, one gets $n-n_1>n_0$. So we obtain
a contradiction. The Claim is true.

Next we assume  $y\in T_n(x)$ such that $f^n(y)=c$, then there
exists $K_1>0$ such that $|Df^n(y)|>K_1$ by Proposition 1. On the
other hand, $f^n(x)\in N_{\delta}(c)$ and the above claim give
$$
|f^n(x)-c|\leq |N_\delta(c)| \ \ \ and \ \ d(f^n(x), f^n(\partial
T_n(x)))\geq |N_{\delta}(c)|.$$

Therefore, by the one-side Koebe principle, it follows that there
exists $K_6>0$ ($K_6$ is not depending on $\delta$) such that
$$
|Df^n(x)|\geq K_o|Df^n(y)| \geq K_o K_1\geq K_6.$$ $\hfill \Box$

\remark We only need the $|Df^n(f(c_j))|\rightarrow \infty$  as $n
\rightarrow \infty$ and result in Proposition 1 in the proof of
Theorem C, a stronger version of BBC property of multimodal maps
with equal critical orders from two sides under a stronger
increasing condition can be found in \cite{Cedervall}.


 \end{document}